\documentclass[a4paper,twoside,11pt]{article}

\usepackage{a4wide}
\usepackage{amssymb}
\setcounter{tocdepth}{3}
\usepackage{graphicx}
\usepackage{url}

\usepackage{amsmath}
\usepackage{amsthm}
\usepackage{algorithm}
\usepackage{algpseudocode}

\usepackage{fancyhdr}
\pagestyle{fancy}
\fancyhead[LO]{E.-M. Brinkmann, M. Burger, J. Grah}
\fancyhead[RE]{Regularization with Sparse Vector Fields}
\fancyhead[LE,RO]{}

\usepackage{authblk}

\newtheorem{theorem}{Theorem}

\begin{document}

\title{Regularization with Sparse Vector Fields: \\From Image Compression to TV-type Reconstruction}

\author[*]{Eva-Maria Brinkmann}
\author[*]{Martin Burger}
\author[+,*]{Joana Grah}
\affil[*]{Institute for Computational and Applied Mathematics, Westf\"alische Wilhelms-Universit\"at M\"unster, Germany}
\affil[+]{Department of Applied Mathematics and Theoretical Physics, University of Cambridge, UK}

\maketitle

\begin{abstract}
This paper introduces a novel variational approach for image compression motivated by recent PDE-based approaches combining edge detection and Laplacian inpainting. The essential feature is to encode the image via a sparse vector field, ideally concentrating on a set of measure zero. An equivalent reformulation of the compression approach leads to a variational model resembling the ROF-model for image denoising, hence we further study the properties of the effective regularization functional introduced by the novel approach and discuss similarities to TV and TGV functionals. Moreover we computationally investigate the behaviour of the model with sparse vector fields for compression in particular for high resolution images and give an outlook towards denoising.\\
\\
\textbf{Keywords:} Image compression, denoising, reconstruction, diffusion inpainting, sparsity, total variation
\end{abstract}

\section{Introduction}

Image compression is a topic of interest since the beginning of digital imaging, remaining relevant continuously due to the ongoing improvement in image resolution. While standard approaches are based on orthogonal bases and frames like cosine transforms or wavelets, an alternative route based on ideas from partial differential equations has emerged recently (cf. \cite{Mainberger_et_al,Mainberger_and_Weickert}). In the latter case particular attention is paid to compressions from which cartoons can be reconstructed accurately avoiding the artefacts of the above mentioned standard approaches by a direct treatment of edges. Roughly speaking, their idea is to detect edges and store the image value in pixels on both sides of an edge. The remaining parts of the image are completed by harmonic inpainting. An alternative interpretation of the two-sided pixel values, worked out more clearly in the osmosis setting of \cite{Weickert_et_al}, is that a vector field $v$ corresponding to the normal derivatives of the image at the edge location is stored. The inpainting of the image $u$ then corresponds to a solution of 
\begin{equation}
	\Delta u = \nabla \cdot v \qquad \mbox{in~}\Omega\,, \label{eq:inpainting}
\end{equation}
where $\Omega$ is the image domain into which $v$ is extended by zero off the edges. 

In this paper we start from reinterpretation of the PDE-based compression in terms of sparsity, which directly translates into a variational framework. The key observation is that with the edge detection and zero extension of $v$ one essentially looks for a sparse vector field that leads to a certain precision in the reconstruction via (\ref{eq:inpainting}). In the spirit of the predominant sparsity regularization of imaging problems we study a direct variational approach: we minimize an $L^1$-type norm of the vector field subject to the constraint that $u$ reconstructed via (\ref{eq:inpainting}) approximates a given image $f$ up to a certain tolerance, i.e.
\begin{equation}
	\Vert v \Vert_1 \rightarrow \min_{u,v} \qquad\mbox{subject to}\quad \Delta u = \nabla \cdot v\,, \quad \Vert u - f \Vert_2 \leq \epsilon\,. \label{eq:model0}
\end{equation}
As we shall discuss below a more rigorous statement of the problem takes into account that $v$ needs to be interpreted as a vectorial Radon measure on $\Omega$ (similar to gradients of $BV$-functions) in a continuum setting. The properties of a limiting continuum model appear to be of particular advantage for the compression issues, since one expects to concentrate the measure $v$ on a set of Lebesgue measure zero. This means that for a suitable discrete approximation of the model the number of pixels we need to store the vector field in divided by the total number of pixels tends to zero. A direct consequence is an increase in the compression rates as the image resolution increases, a highly desirable property. 

Apart from compression we shall investigate models like the above one as a regularization for more general imaging problems. The relation to denoising models can readily be seen when the above constraint of approximating $f$ is incorporated via a Lagrange functional. Indeed, there exists a Lagrange parameter $\lambda > 0$ such that (\ref{eq:model0}) is equivalent to 
\begin{equation}
	\frac{\lambda}2 \Vert u - f \Vert_2^2 + \Vert v \Vert_1 \rightarrow \min_{u,v} \qquad\mbox{subject to}\quad \Delta u = \nabla \cdot v\,. \label{eq:model1}
\end{equation}
Hence, we may also interpret the norm of $v$ as an implicit regularization of $v$ and simply replace the first term by an arbitrary data term to treat other imaging problems. This is more apparent when we replace the constraint by a natural special solution $v = \nabla u$. In this case (\ref{eq:model1}) is just the ROF-model for denoising (cf. \cite{Rudin_et_al}) and indeed an equivalence relation holds in spatial dimension one. In higher spatial dimensions it becomes apparent that a key role is played by the curl $\nabla \times v$. If the curl of $v$ vanishes, it simply becomes a gradient vector field and hence we again recover the ROF-model. This motivates to study further generalizations of the model also penalizing $\nabla \times v$. With the interpretation that $v$ is related to the gradient of $u$, the additional term becomes a higher order regularization effectively. Functionals of this type are currently studied in particular to reduce staircasing artefacts in total variation regularization (cf. \cite{Chan_et_al,Bredies_et_al,Benning_highOrderTV_2013}), and indeed we shall be able to draw very close analogies to the recently very popular TGV-approach (cf. \cite{Bredies_et_al}). The reduction of staircasing is confirmed by computational experiments. However, in the denoising case we shall see that the divergence part alone does not suffice for appropriate smoothing.

The remainder of the paper is organized as follows: In Section 2 we discuss the model and its variants. Then, we proceed to a discussion of some theoretical properties in Section 3, which provide some insights into the sparse vector field model. Section 4 introduces a numerical solution based on primal-dual optimization methods, which is used for some experimental studies in Section 5. Finally, we provide a conclusion and directions for future research.

\section{Variational Model and Regularization Functionals}
\label{chap:Variational_Model}

In order to obtain an appropriate formulation of (\ref{eq:model1}) we proceed as in \cite{Bredies_Pikkarainen} and interpret $v$ as a $d$-dimensional Radon measure on $\Omega \subset \mathbb{R}^d$. The regularization functional is then 
\begin{equation}
 \Vert v \Vert_{{\cal M}(\Omega)} = \sup_{\varphi \in C(\Omega)^d, \Vert \varphi\Vert_\infty \leq 1} \int_\Omega \varphi \cdot ~dv\,,
\end{equation}
where (\ref{eq:inpainting}) is to be understood in a weak form as well. Hence, (\ref{eq:model1}) is rewritten as
\begin{equation}
	\frac{\lambda}2 \Vert u - f \Vert_2^2 +  \Vert v \Vert_{{\cal M}(\Omega)} \rightarrow \min_{u \in L^2(\Omega),v \in {\cal M}(\Omega)^d} \qquad\mbox{subject to}\quad \Delta u = \nabla \cdot v\,. \label{eq:model1a}
\end{equation}

The model can be formulated as in recent approaches for denoising by defining $w = \nabla u - v $ (in the sense of distributions). With $\chi_0$ being the characteristic function of the set $\{0\}$ we obtain
\begin{equation}
	\frac{\lambda}2 \Vert u - f \Vert_2^2 +  \Vert \nabla u - w \Vert_{{\cal M}(\Omega)}
	+ \chi_0(\nabla \cdot w) \rightarrow \min_{u \in L^2(\Omega),w \in {\cal D}'(\Omega)^d}\,. \label{eq:model1b}
\end{equation}
One observes that the regularization functional is now an infimal convolution of total variation and a functional of $\nabla w$. The same structure is apparent in the recently popularized TGV-model (cf. \cite{Bredies_et_al}), which in the analogous setting reads
\begin{equation}
	\frac{\lambda}2 \Vert u - f \Vert_2^2 +  \Vert \nabla u - w \Vert_{{\cal M}(\Omega)}
	+ \Vert \nabla w \Vert_{{\cal M}(\Omega)} \rightarrow \min_{u \in L^2(\Omega),w \in {\cal M}(\Omega)^d}\,. \label{eq:tgv}
\end{equation}
A major difference of the TGV approach to our new model is the fact that in our case only the divergence of $v$ respectively $w$ is penalized, which might be too weak to achieve suitable regularization properties. This can obviously be realized if we add additional regularization terms depending on $\nabla \times v$, which is natural since a divergence-free vector field is constant if and only if its curl vanishes. Note that $\nabla \times \nabla u =0$, hence $\nabla \times v = \nabla \times w$, i.e. we can formulate regularization either on $v$ or on $w$. The most general formulation is given by
\begin{equation}
	\frac{\lambda}2 \Vert u - f \Vert_2^2 +  \Vert \nabla u - w \Vert_{{\cal M}(\Omega)}
	+ F(\nabla \cdot w) + G(\nabla \times w) \rightarrow \min_{u \in L^2(\Omega),w \in {\cal D}'(\Omega)^d }\,, \label{eq:model2allgemein}
\end{equation}
with convex functionals $F$ and $G$, which however exceeds the scope of this paper and is left as subject of future research. 

We finally recast the above results in terms of the regularization they induce on $u$. In this respect we also discuss the boundary conditions in (\ref{eq:inpainting}), respectively its weak formulation. Natural boundary conditions are no-flux conditions $(\nabla u - v)\cdot n = 0$ on $\partial \Omega$, which means the used weak formulation of (\ref{eq:inpainting}) is
\begin{equation}
	\int_\Omega \Delta \varphi ~u~dx + \int_\Omega \nabla \varphi \cdot dv(x) = 0 \qquad \forall \varphi \in C^2(\Omega), \nabla \varphi \cdot n = 0 \mbox{~on~}\partial \Omega\,. \label{eq:inpaintingweak1}
\end{equation}
Hence, we can define the regularization functional $R:L^1(\Omega) \rightarrow [0,\infty]$
\begin{equation} \label{eq:Rdefinition}
	R(u) := \inf_{v \text{~satisfying~} (\ref{eq:inpaintingweak1})} \Vert v \Vert_{{\cal M}(\Omega)}\,.
\end{equation}

Once we have defined the regularization terms it is straight-forward to extend the variational model to other imaging tasks, e.g.  by just changing the data fidelity. Moreover, we can consider Bregman iterations (cf. \cite{Osher_et_al})
\begin{equation}
	u^{k+1} \in \mbox{arg}\min_u \left( \frac{\lambda}2 \Vert u - f \Vert_2^2 + R(u) - \langle p^k, u \rangle \right)\,, \quad p^k \in R(u^k)\,,
\end{equation}
as well as other scale space methods such as the gradient flow (cf. \cite{Andreu_et_al}) $\partial_t u \in - \partial R(u)$
and the inverse scale space method (cf. \cite{Burger_et_al}).

\section{Properties of Regularization by Sparse Vector Fields}

In the following we further discuss some properties of the regularization functional $R$ defined via (\ref{eq:Rdefinition}). To avoid obvious technicalities with constants, we restrict ourselves to the space
\begin{equation}
L^1_\diamond(\Omega) = \{~u \in L^1(\Omega)~|~\int_\Omega u~dx = 0~\}
\end{equation}
if $\Omega$ is a bounded domain. 

We start with some topological properties induced by $R$:
\begin{theorem}
Let $\Omega$ be a sufficiently regular domain. Then there exists a constant $c > 0$ such that 
\begin{equation}
	\Vert u \Vert_{L^1(\Omega)} \leq c R (u)
\end{equation}
for all $u \in L^1_\diamond(\Omega)$. Moreover, dom$(R)$ is a subspace of $L^1(\Omega)$ and $R$ is a norm on dom$(R) \cap L^1_\diamond(\Omega) $. Finally, 
\begin{equation}
	R(u) \leq TV(u) \qquad \forall~u \in BV(\Omega)\,. \label{RTVestimate}
\end{equation}
\end{theorem}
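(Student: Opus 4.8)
The plan is to prove the three assertions in the order they are stated, since the first (a Poincaré-type inequality) sets up the topology that makes the others meaningful. For the estimate $\Vert u\Vert_{L^1(\Omega)}\le c\,R(u)$, I would work directly from the weak formulation \eqref{eq:inpaintingweak1}: given $u\in L^1_\diamond(\Omega)$ with $R(u)<\infty$ and an admissible $v$, I want to test \eqref{eq:inpaintingweak1} against a well-chosen $\varphi$. The natural choice is to let $\varphi$ solve the Neumann problem $\Delta\varphi=\psi$ with $\nabla\varphi\cdot n=0$ on $\partial\Omega$ for a test function $\psi\in C(\Omega)$ with $\int_\Omega\psi=0$ (the compatibility condition), normalized by $\Vert\psi\Vert_\infty\le 1$. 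Elliptic regularity on a sufficiently regular $\Omega$ gives $\varphi\in C^2$ (or at least $W^{2,p}$ for large $p$, which embeds into $C^1$) with $\Vert\nabla\varphi\Vert_\infty\le C_\Omega\Vert\psi\Vert_\infty$. Plugging into \eqref{eq:inpaintingweak1} yields $\int_\Omega\psi\,u\,dx=-\int_\Omega\nabla\varphi\cdot dv$, whose right-hand side is bounded by $\Vert\nabla\varphi\Vert_\infty\Vert v\Vert_{\mathcal M(\Omega)}\le C_\Omega\Vert v\Vert_{\mathcal M(\Omega)}$. Taking the supremum over such $\psi$ gives $\Vert u\Vert_{L^1(\Omega)}$ (using $\int u=0$ so that the zero-mean constraint on $\psi$ costs nothing), and then taking the infimum over admissible $v$ gives $\Vert u\Vert_{L^1}\le c\,R(u)$ with $c=C_\Omega$.

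For the second claim, that $\mathrm{dom}(R)$ is a subspace and $R$ restricted to $\mathrm{dom}(R)\cap L^1_\diamond(\Omega)$ is a norm, I would verify the norm axioms. Positive homogeneity $R(\lambda u)=|\lambda|R(u)$ is immediate from the linearity of \eqref{eq:inpaintingweak1} in $(u,v)$ together with the homogeneity of $\Vert\cdot\Vert_{\mathcal M}$; the triangle inequality $R(u_1+u_2)\le R(u_1)+R(u_2)$ follows because if $v_i$ is admissible for $u_i$ then $v_1+v_2$ is admissible for $u_1+u_2$ and $\Vert v_1+v_2\Vert_{\mathcal M}\le\Vert v_1\Vert_{\mathcal M}+\Vert v_2\Vert_{\mathcal M}$; convexity of the admissible set and subadditivity then also give that $\mathrm{dom}(R)$ is closed under addition, and homogeneity under scaling, so it is a subspace. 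Definiteness, $R(u)=0\Rightarrow u=0$ on $L^1_\diamond$, is exactly where the Poincaré inequality from the first part is used: $R(u)=0$ forces $\Vert u\Vert_{L^1}\le c\cdot 0=0$. One should also note $R(u)=0$ is attainable only in the infimum sense, but the inequality $\Vert u\Vert_{L^1}\le cR(u)$ is all that is needed.

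For the final inequality \eqref{RTVestimate}, the idea is simply that $v=\nabla u$ is an admissible competitor in \eqref{eq:Rdefinition} whenever $u\in BV(\Omega)$. Indeed, for $u\in BV$ the measure $v=Du$ satisfies the integration-by-parts identity $\int_\Omega\Delta\varphi\,u\,dx+\int_\Omega\nabla\varphi\cdot dDu=\int_{\partial\Omega}(\partial_n\varphi)\,u=0$ for every $\varphi\in C^2$ with $\nabla\varphi\cdot n=0$ on $\partial\Omega$, which is precisely \eqref{eq:inpaintingweak1}; this uses the Gauss–Green formula for $BV$ functions and the no-flux boundary condition on $\varphi$. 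Hence $R(u)\le\Vert Du\Vert_{\mathcal M(\Omega)}=TV(u)$. The main obstacle is really the first step: making the elliptic-regularity argument clean on a merely "sufficiently regular" domain, i.e. pinning down exactly what regularity of $\partial\Omega$ guarantees $\varphi\in C^2$ (or an adequate substitute such as $C^1$ with $W^{2,p}$, $p>d$, which still lets one test \eqref{eq:inpaintingweak1} after a density argument extending the class of admissible $\varphi$). Everything after that is the routine bookkeeping of infima, suprema and the norm axioms.
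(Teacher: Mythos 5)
Your proposal is correct and follows essentially the same route as the paper: dualize the $L^1$-norm, solve a Neumann Poisson problem for the test function, use elliptic regularity to bound $\Vert\nabla\varphi\Vert_\infty$, verify the norm axioms via linearity of the constraint, and take $v=Du$ as an admissible competitor for \eqref{RTVestimate}. Your handling of the zero-mean compatibility condition for the Neumann problem is in fact slightly more careful than the paper's.
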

\begin{proof}
We have
\begin{equation}
	\Vert u \Vert_{L^1(\Omega)} = \sup_{\phi \in L^\infty(\Omega), \Vert \phi \Vert_\infty \leq 1} \int_\Omega u(x) \phi(x)~dx\,.
\end{equation}
For $\phi \in L^\infty(\Omega)$ we define $w$ as the weak solution of the Poisson equation $-\Delta w = \phi$ with homogeneous Neumann boundary conditions and mean value zero. Thus, using the weak formulations we have
\begin{equation}
\int_\Omega u(x) \phi(x)~dx = \int_\Omega \nabla w(x) \cdot~dv(x)\,.
\end{equation}
Regularity of solutions of the Poisson equation yields continuity of $w$ and the existence of a constant $c$ such that
$ \Vert \nabla w \Vert_\infty \leq c \Vert \phi \Vert_\infty = c, $
for all $\phi \in L^\infty(\Omega)$. Hence,
\begin{equation}
\sup_{\phi \in L^\infty(\Omega), \Vert \phi \Vert_\infty \leq 1} \int_\Omega u(x) \phi(x)~dx \leq c \Vert v \Vert_{{\cal M}(\Omega)}\,,
\end{equation}
which yields the estimate of the $L^1$-norm.

The one-homogeneity and triangle inequality follow in a straight-forward way from the definition, consequently $R$ is a norm on a subspace of $L^1_\diamond(\Omega) $.
Estimate (\ref{RTVestimate}) is obtained since $v=\nabla u$ satisfies (\ref{eq:inpaintingweak1}), hence the infimum over all admissible $v$ is less or equal to the total variation.
\end{proof}

A next step towards the understanding of properties of $R$ is an investigation of its subdifferential, with subsequent consequences for optimality conditions of (\ref{eq:model1a}). For brevity we use a formal approach based on Lagrange multipliers. We have $p \in \partial R(u)$ if and only if
$ p = \partial_u L(u,v,q), $
for solutions $(v,q)$ of the saddle-point problem
\begin{equation}
\inf_v \sup_q L(u,v,q)
\end{equation}
for given $u$ and the Lagrangian is defined as
\begin{equation}
L(u,v,q) = \Vert v \Vert_{{\cal M}(\Omega)}+\int_\Omega \Delta q ~u~dx + \int_\Omega \nabla q \cdot dv(x)\,.
\end{equation}
Thus, we find $p=\Delta q$ and the optimality conditions for the saddle point problem yield (\ref{eq:inpaintingweak1}) and $- \nabla q \in \partial \Vert v \Vert_{{\cal M}(\Omega)}. $

It is instructive to compare the subgradients of $R$ with those of TV. Indeed with similar reasoning one can show that $p \in \partial TV(u)$ if $p = - \nabla \cdot g$ for a vector field $g \in \partial \Vert v \Vert_{{\cal M}(\Omega)}$ and $v = \nabla u$. This means that if we can write $g = - \nabla q$ we also obtain $p \in \partial R(u)$. In particular this opens the door towards a simple verification whether solutions of the ROF-model are also solutions of the sparse vector field model (\ref{eq:model1a}). One simply has to inspect the subgradient in the optimality condition and check whether the associated vector field $g$ can be written as a gradient. We will exemplify this in the case of the most well-known example for the ROF model, the reconstruction of the indicator function of a ball on $\Omega = \mathbb{R}^d$ (cf. \cite{meyer}). This function is an eigenfunction of TV, i.e. there exists $\lambda$ (depending on the radius $R$ of the ball) such that
\begin{equation}
	\lambda u = \nabla \cdot g \in \partial TV(u)\,.
\end{equation}
It is easy to see that $g=\nabla F(b)$, where $b$ is the signed distance function of the ball (cf. \cite{Delfour_and_Zolesio}) and $F$ satisfies
\begin{equation}
F'(z) =\begin{cases}1 & \mbox{if~} z \leq 0 \\ \frac{R}{z+R} & \mbox{if~} z > 0\,. \end{cases}
\end{equation}
Hence, we have $g = -\nabla q$ for $q=-F(b)$, which implies that $\lambda u \in \partial R(u)$,
with the same value. The results in \cite{Benning_Burger_2013} imply that the variational model (\ref{eq:model1a}) reconstructs data $f$ being the indicator function of a ball in the form $u = c f$, with $c < 1$ depending on $\lambda$ and $\alpha$. Moreover, the Bregman iteration and inverse scale space methods reconstruct $f$ exactly after a finite number of iterations respectively finite time.

Finally we return to the original idea of compressing an image by encoding a sparse vector field. For this sake it is desireable that $v$ has support on a set of small (or even zero) Lebesgue measure. Thinking about the continuum case as a limit of discrete pixel images, the asymptotic property of zero Lebesgue measure means that the image (in 2D) can be encoded by a number of values proportional to the square root of the number of pixels. Consequently the compression rate of such a PDE-based approach should improve with higher image resolution, which is highly relevant given the current trend of screen and camera resolution.
We already see from the example of the indicator function of a ball above that we can expect the method to encode a piecewise constant image by vector fields concentrated on the edge sets. For more complicated images the vector field potentially needs to have a larger support to obtain a suitable reconstruction, since away from the support of $v$ the function $u$ is just harmonic. A better understanding of the compression properties would need a characterization of the structure of minimizers, similar to \cite{chambollecaselles,valkonen}. While the one-dimensional case is equivalent to total variation regularization and hence always yields $v$ concentrated on a set of zero Lebesgue measure (cf. \cite{Ring}), the multi-dimensional case is less clear and left as an interesting topic of future research. 
Further studies on the compression properties will be carried out below by computational experiments.

\section{Numerical Solution via Primal-Dual Methods}

In order to solve our minimization problem (\ref{eq:model1a}) numerically, we at first need to discretize it. Thereto we will adopt the notation of the continuous functions $u$, $v$, $f$ and the operators $\nabla$, $\nabla\cdot$ and $\Delta$, but from now on, we are thereby referring to their discretized versions, which we shall comment on in the following.

For simplification, we assume the normalized images to be quadratic, i.e. $f,\ u \in [0,1]^{N \times N}$. The pixel grid can be written as $\{(ih,jh)\colon 1 \leq i,j \leq N\}$, where $h$ denotes the spacing size.
We use forward finite differences with Neumann boundary conditions for the discretization of the gradient of $u$ and in order to preserve the adjoint structure the divergence is discretized with backward finite differences.
%
Moreover, in this discrete setting the $d$-dimensional Radon measure on $\Omega \subset \mathbb{R}^d$ becomes the discrete L1-norm. Considering $v$ as being related to the gradient of $u$, the regularization term in problem \eqref{eq:model1a} can be interpreted as the discrete total variation norm. We decided to use its isotropic version which is given by
\begin{equation}
	\Vert \nabla u \Vert_1 = \sum\limits_{i,j} \vert(\nabla u)_{i,j}\vert = \sum\limits_{i,j} \sqrt{((\nabla u)^1_{i,j})^2 + ((\nabla u)^2_{i,j})^2}\,.
\end{equation}
Consequently, the minimization problem \eqref{eq:model1a} reads:
\begin{equation}
	\frac{\lambda}2 \Vert u - f \Vert_2^2 + \Vert v \Vert_1 \rightarrow \min_{u,v} \qquad\mbox{subject to}\quad \Delta u = \nabla \cdot v  \label{eq:discrete_model1}
\end{equation}
or equivalently:
\begin{equation}
	\frac{\lambda}2 \Vert u - f \Vert_2^2 + \Vert v \Vert_1 + \chi_0(\Delta u - \nabla \cdot v) \rightarrow \min_{u,v}\,, \label{eq:discrete_model2}
\end{equation}
where $\chi_0$ is again the characteristic function of the set $\{0\}$. 
Defining
\begin{equation}
x :=\ (u,v)^T\,, \hfill
\ G(x) :=\ \frac{\lambda}2 \Vert u - f \Vert_2^2 + \Vert v \Vert_1\,, \hfill
\ F(Kx) :=\ \chi_0(\Delta u - \nabla \cdot v)\,,
\end{equation}
one can easily see that we can calculate a solution of the above problem \eqref{eq:discrete_model2} by applying a version of the recently very popular primal-dual algorithms (cf. \cite{{Esser_et_al},{Chambolle_and_Pock}}) designed for efficiently solving general minimization problems of the form  
\begin{equation}
F(Kx) + G(x) \rightarrow \min_{x}\,, \label{eq:general_minimization_problem}
\end{equation}
where F and G are proper convex lower-semicontinuous functionals.\\
\begin{algorithm}[h!]
\caption{Primal-Dual Algorithm by Chambolle and Pock}
\label{alg:cp}
{
\begin{algorithmic}
\Require $\tau, \sigma > 0$, $\theta \in [0,1]$
\Ensure $x^0$, $y^0$, $\bar{x}^0=x^0$
\For{$n \geq 0$}
	\State $y^{n+1} = \left( I + \sigma \partial F^* \right)^{-1} \left( y^n + \sigma K \bar{x}^n \right)$
	\State $x^{n+1} = \left( I + \tau \partial G \right)^{-1} \left( x^n - \tau K^* y^{n+1} \right)$
	\State $\bar{x}^{n+1} = x^{n+1} + \theta \left( x^{n+1} - x^n \right)$
\EndFor
\end{algorithmic}
}
\end{algorithm}
We decided to use the first-order primal-dual algorithm as proposed by Chambolle and Pock (cf. \cite{Chambolle_and_Pock}) given by Algorithm \ref{alg:cp}. Adopting their notation we can now derive the updates concerning our minimization problem \eqref{eq:discrete_model2}. Thereto we at first have to calculate the dual functional $F^*(y) = \sup_x \lbrace \langle y,x \rangle - F(x)\rbrace$. Since in our case $F$ is the characteristic function of the set $\lbrace 0 \rbrace$, it is straightforward to see that $F^*$ equals zero. Hence the update for the dual variable $y$ simplifies to $y^{n+1} = y^n + \sigma K \bar{x}^n$.
Next we consider the update for the primal variable $x$. As the subdifferentials of $G$ with respect to $u$ and $v$ are independent of $v$ and $u$, respectively, we can update each component of $x$ separately (see Algorithm 2). Using the norm of the sum of the primal and dual residual given by (cf. \cite{Goldstein_et_al})
\begin{equation}
\Vert P^{n+1} \Vert^2 + \Vert D^{n+1} \Vert^2,
\end{equation}
where
\begin{equation}
P^{n+1} = \frac{x^n - x^{n+1}}{\tau} - K^*(y^n - y^{n+1})\,, \hfill 
\ D^{n+1} = \frac{y^n - y^{n+1}}{\sigma} - K(x^n - x^{n+1})\,, 
\end{equation}
as a stopping criterion the implementation of our minimization problem \eqref{eq:discrete_model2} can be summarized by Algorithm \ref{alg:model1}, where the two-dimensional isotropic shrinkage operator is defined by:
\begin{equation}
shrinkage(z, \gamma) = \max (\Vert z \Vert_1 - \gamma)\frac{z}{\Vert z \Vert_1}\,.
\end{equation}
\begin{algorithm}[h!]
\caption{Primal-Dual Algorithm for Minimization of (\ref{eq:discrete_model2})}
\label{alg:model1}
{
\begin{algorithmic}
\Require image $f$, $\lambda > 0$, $\tau, \sigma > 0$, $\theta \in [0,1]$, max no of iterations, $\epsilon > 0$
\Ensure $u^0$, $v^0$, $y^0$, $\bar{u}^0=u^0$, $\bar{v}^0=v^0$
	\While{primal-dual residual $> \epsilon$ \textbf{and} $n <$ max no of iterations}
		\State $y^{n+1} = \left( I + \sigma \partial F^* \right)^{-1} \left( y^n + \sigma \nabla\cdot\left( \nabla \bar{u}^n - \bar{v}^n \right) \right) = y^n + \sigma \nabla\cdot\left( \nabla \bar{u}^n - \bar{v}^n \right)$
		\State $u^{n+1} = \left( I + \tau \partial_u G \right)^{-1} \left( u^n - \tau \Delta y^{n+1} \right) = \frac{1}{1+\lambda\tau}\left( \lambda\tau f + u^n - \tau \Delta y^{n+1} \right)$
		\State $v^{n+1} = \left( I + \tau \partial_d G \right)^{-1} \left( v^n - \tau \nabla y^{n+1} \right) =$ shrinkage($v^n - \tau \nabla y^{n+1},\tau$)
		\State $\bar{u}^{n+1} = u^{n+1} + \theta \left( u^{n+1} - u^n \right)$
		\State $\bar{v}^{n+1} = v^{n+1} + \theta \left( v^{n+1} - v^n \right)$
	\EndWhile
\end{algorithmic}
}
\end{algorithm}

As already mentioned in Section \ref{chap:Variational_Model} the model discussed so far can be further extended by considering for example Bregman iterations as proposed by Osher and coworkers \cite{Osher_et_al}. To incorporate this iterative regularization method in our algorithm, we use their "adding-back-the-noise" formulation such that the update for $u$ in the previously introduced routine is replaced by 
\begin{equation}
u^{n+1} = \frac{1}{1 + \lambda\tau}(\lambda\tau f + h^k + u^n - \tau \Delta y^{n+1})\,.
\end{equation}
Besides, the existing implementation is extended by an outer loop over a given number of Bregman iterations in which $h$ is updated by $h^{k+1} = h^k + f - u $
after each complete cycle of the inner loop.

\section{Computational Results}

In the following we present some results for the cases of image compression and denoising discussed above. As an example we chose the frequently used image "Trui" ($257 \times 257$ pixels), making the approach comparable to previous results such as \cite{Mainberger_et_al}. However, since the size of this image does not correspond to modern HD resolutions,  we also created two similar images with sizes of $1024 \times 1024$ and $4800 \times 4800$ pixels, respectively. 

\subsection{Image Compression}
\begin{figure}[h]
\centering
\includegraphics[height=5.5cm]{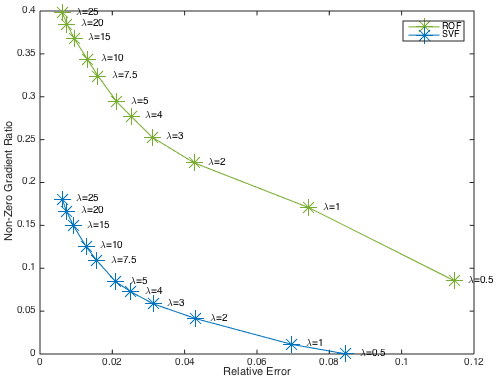} \hspace{0.5cm}
\includegraphics[height=5.5cm]{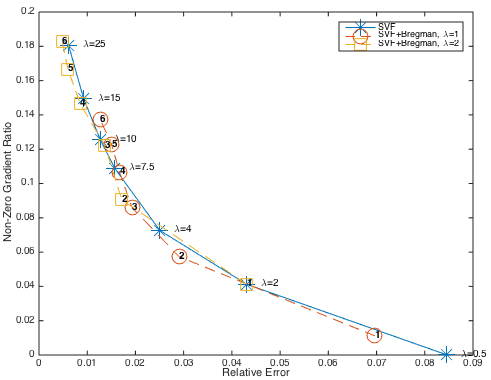}
\caption{Comparison of the non-zero gradient ratio for different parameter values}
\label{fig:ComparisonOfParameterValues}
\end{figure}

We start by discussing the compression of the cartoon part of an image, which is illustrated in Figure \ref{fig:ComparisonOfParameterValues} for the Trui test image. We plot the relative error vs. the non-zero gradient ratio, which means the number of pixels with non-zero $v$ divided by the total number of pixels. On the left we show a comparison of the sparse vector field (SVF) model with the classical ROF model, which demonstrates the improved compression properties. On the right we plot the results for the variational SVF model compared to the Bregman iteration (numbers correspond to Bregman iterations), which illustrates that no significant improvement can be obtained with respect to compression by the latter. In Figure \ref{fig:PrincipleOfImageCompressionAlgorithm} we display the results of the compression and the corresponding vector fields for $\lambda = 10$. One observes that the support of $v$ corresponds well to an edge indicator, confirming the relation to the approach in \cite{Mainberger_et_al}. The reconstructed image seems to preserve the main edges well, but does not have the strict piecewise constant behaviour as total variation regularization, which seems attractive for further reconstruction tasks. 

\begin{figure}[h]
\centering
\includegraphics[height = 2.65cm]{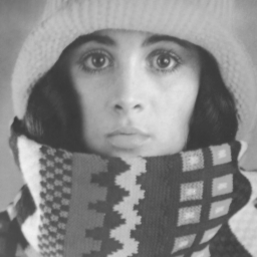}\hfill
\includegraphics[height = 2.65cm]{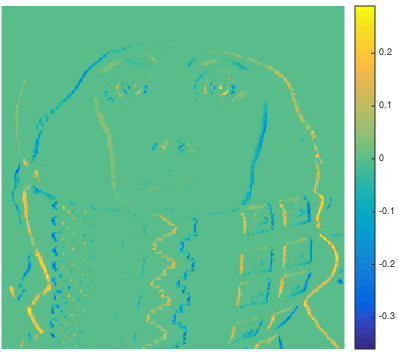}\hfill
\includegraphics[height = 2.65cm]{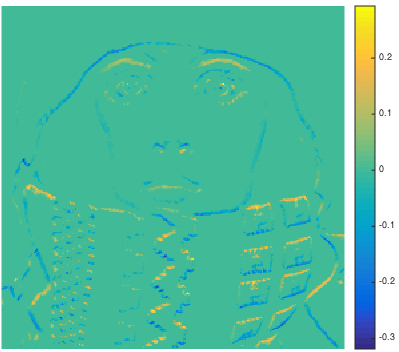}\hfill
\includegraphics[height = 2.65cm]{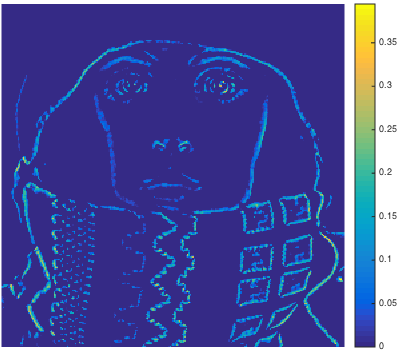}\hfill
\includegraphics[height = 2.65cm]{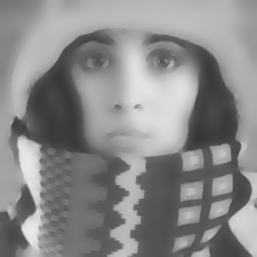}
\caption{From left to right: Original image, vector field $v$ in x- and y-direction, norm of $v$, and the corresponding reconstruction for $\lambda = 10$.}
\label{fig:PrincipleOfImageCompressionAlgorithm}
\end{figure}

We also investigate the behaviour for higher resolution. In order to mimic increasing resolution we simply downscale the test images to $r$ times the number of original pixels, $r \in (0,1]$. We then perform compression at fixed error tolerance (corresponding to constant $\lambda$ when appropriately scaled) for the images of different size and finally plot the non-zero gradient ratio vs. $r$ in Figure \ref{ComparisonOfNon-ZeroGradientRatios}. Our expectation that due to continuum limit and the potential convergence towards a concentrated measure the ratio decreases with increasing resolution is well-confirmed for the Trui image as well as for a similar image at higher resolution. 
\begin{figure}[h]
\centering
\includegraphics[height = 4.25cm]{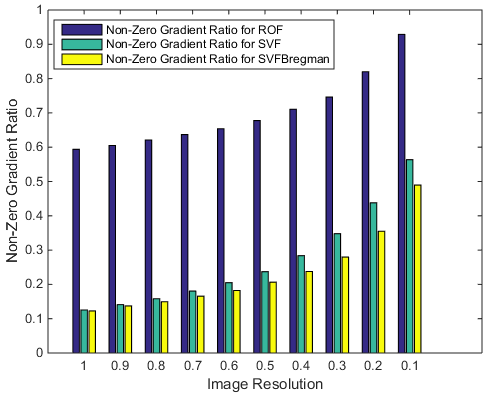}\hspace{0.2cm}
\includegraphics[height = 4.25cm]{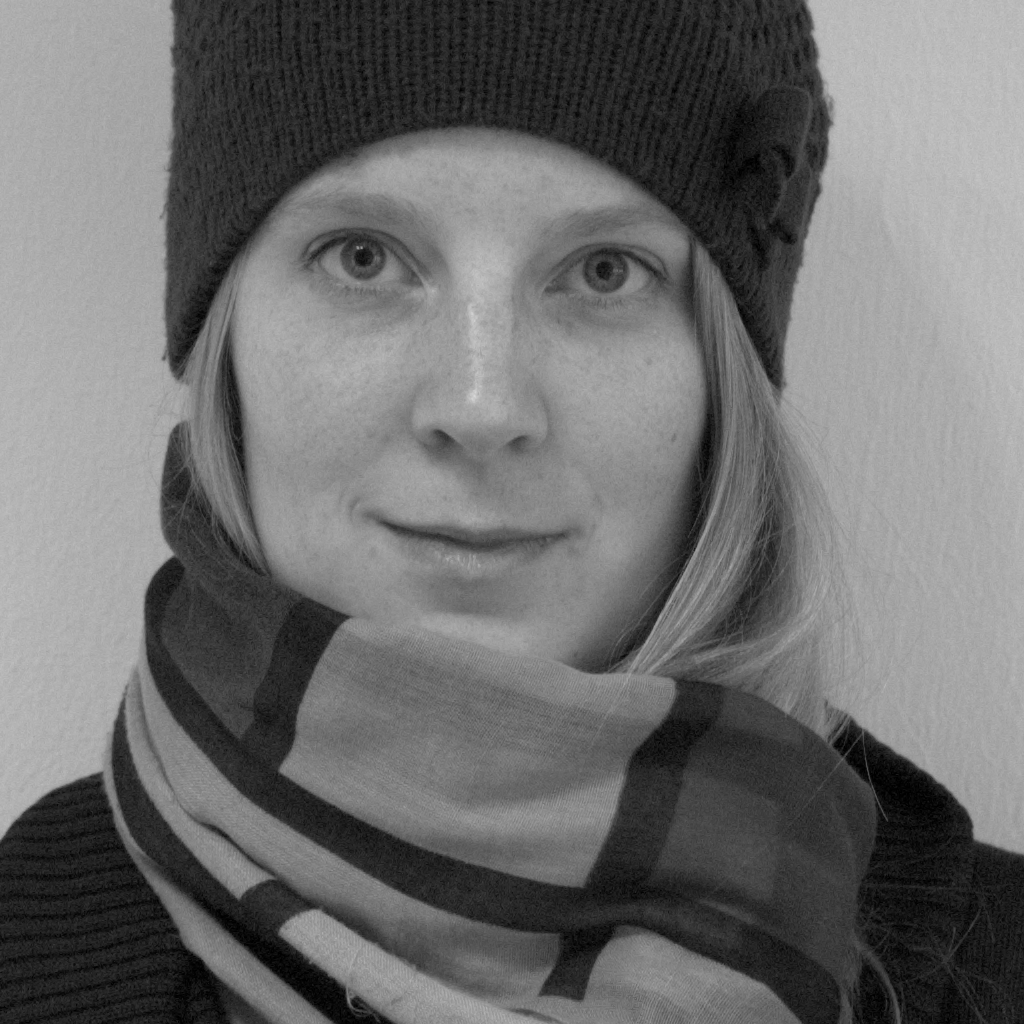}\hspace{0.2cm}
\includegraphics[height = 4.25cm]{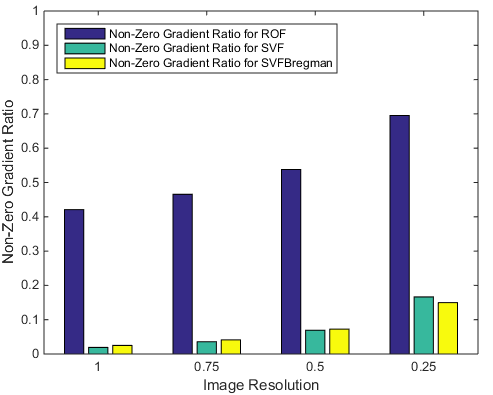}\hspace{0.5cm}
\caption{Comparison of the Non-Zero Gradient Ratios of the SVF ($\lambda = 10)$, the SVFBregman ($\lambda = 1$ and $5$ Bregman Iterations) and the ROF ($\lambda = 10$) algorithms for the Trui test image (left) and an additional test image of size 1024x1024 pixels (right). The latter test image is displayed in the middle.}
\label{ComparisonOfNon-ZeroGradientRatios}
\end{figure}

Finally we display the result of the SVF model for image compression performed on a high definition image and compare it to a jpg image with the same compression rate in Figure \ref{fig:ComparisonWithJPG_HighResolutionTestJoana}. Indeed we achieve an improved PSNR with the SVF model. We also mention that several further compression steps on $v$ can be carried out in an analogous way to \cite{Mainberger_et_al}, which will lead to highly improved rates, but is beyond the scope of this paper. 
\begin{figure}[h]
\begin{minipage}[c]{0.275\textwidth}
\centering
\includegraphics[height = 4.25cm]{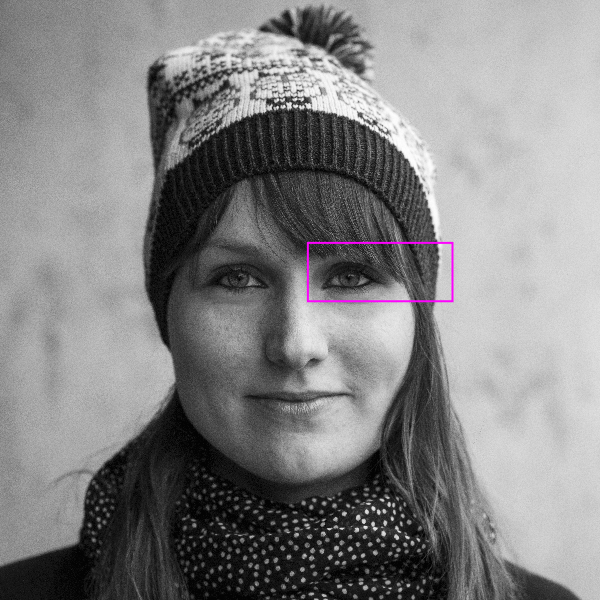}
\end{minipage}\hfill
\begin{minipage}[c]{0.3\textwidth}
\includegraphics[width = 4.75cm]{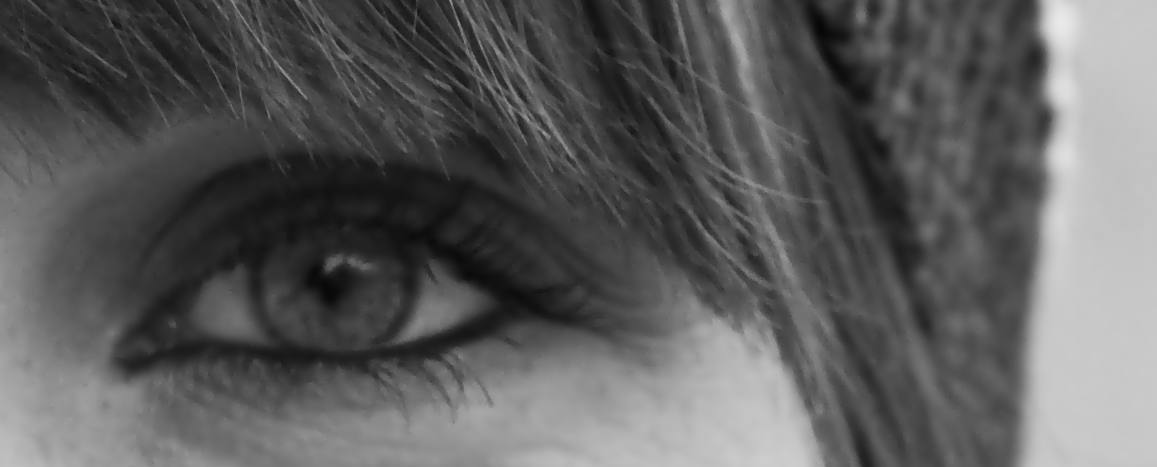}\\
\vfill
\includegraphics[width = 4.75cm]{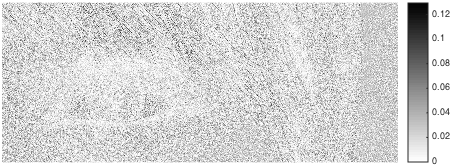}
\end{minipage}\hfill
\begin{minipage}[c]{0.3\textwidth}
\includegraphics[width = 4.75cm]{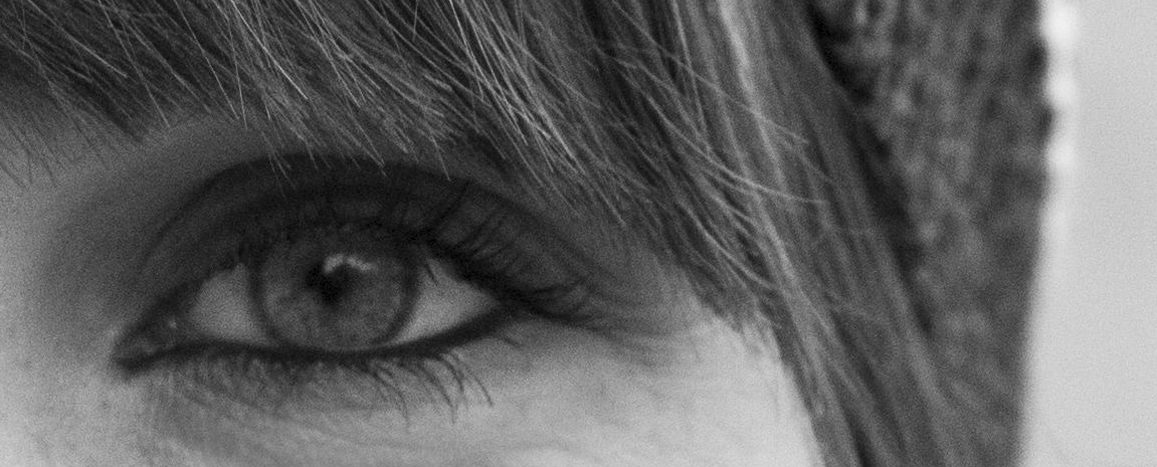}\\
\vfill
\includegraphics[width = 4.75cm]{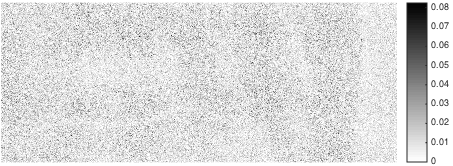}
\end{minipage}
\caption{From left to right: Original image with a resolution of 16.1068 bits per pixel, our reconstruction at a compression of 1.1892 bits per pixel ($\lambda = 10$) with a PSNR value of \textit{34.0767 dB}  and the jpg image at the same compression rate with a PSNR value of 33.3214 dB, corresponding difference images to original one in bottom row.}
\label{fig:ComparisonWithJPG_HighResolutionTestJoana}
\end{figure}

\subsection{Denoising}

We finally give an outlook towards other tasks such as denoising with sparse vector fields. For this sake we compare the model with results of the classical ROF model and choose in both cases $\lambda$ such that the PSNR to the original image is maximized. We illustrate the result in Figure \ref{fig:DenoisingTrui}, which appears to be representative for all our tests. One observes that the reduced staircasing in the SVF model compared to the ROF model is less visible, which is due to point like artefacts that were not present without noise. This results in a lower PSNR than for the ROF model, which is consistent for all our tests. The reason for the artefacts is that $v$ is too sparse in this case and does not encode the contours anymore. This can be seen in the last image of Figure \ref{fig:DenoisingTrui}.
\begin{figure}[t]
\centering 
\includegraphics[height=3.5cm]{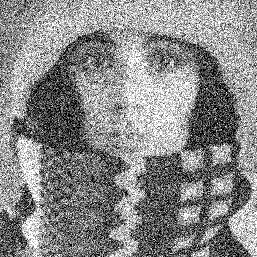}\hspace{0.15cm}
\includegraphics[height=3.5cm]{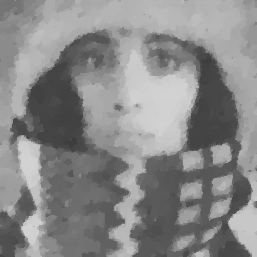}\hspace{0.15cm}
\includegraphics[height=3.5cm]{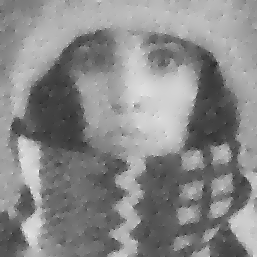}\hspace{0.15cm}
\includegraphics[height=3.5cm]{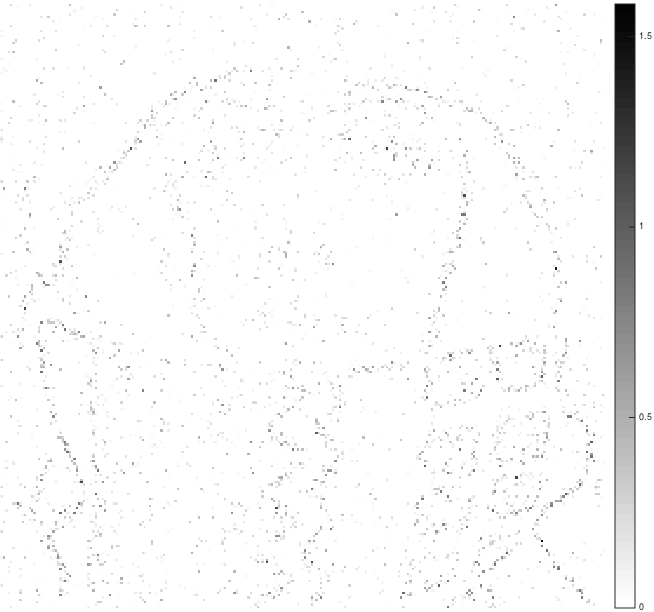}
\caption{Noisy image (Gaussian noise, variance 0.05, left), ROF denoising result ($\lambda=5$, middle left), SVF denoising result ($\lambda=4$, middle right), norm of vector field $v$ in SVF model (right).}
\label{fig:DenoisingTrui}
\end{figure}

\section{Conclusion}

We have introduced the SVF model for image compression motivated by diffusion inpainting and found several interesting connections to TV-type regularization methods. The SVF approach leads to significantly sparser vector fields than the gradients of total variation, which appears quite useful for compression tasks, and seems not to suffer from staircasing artefacts, which appears attractive for other reconstruction tasks. However, the denoising performance of the SVF model is not convincing, since it creates point artefacts at reasonable choice of the regularization parameter. This is probably due to the fact that the norm induced by the corresponding regularization is too weak (there is an upper but no lower bound in terms of TV). For future improvement it seems natural to consider regularization on the curl of the vector field as well, such that $v$ becomes again concentrated on contours rather than scattered points. In particular we suggest to study the more general problem 
\begin{equation}
	\frac{\lambda}2 \Vert u - f \Vert_2^2 + \Vert \nabla u - w \Vert_1 + \beta \Vert \nabla \times w \Vert_1 + \gamma \Vert \nabla \cdot w \Vert_1 \rightarrow  \min_{u,w}  \label{eq:model2}
\end{equation}
for positive $\beta$ and $\gamma$ where again $w= \nabla u - v$ (see \eqref{eq:model2allgemein}). Due to the exact penalization properties of one-norms we expect that the ROF model corresponds to the case of $\beta$ and $\gamma$ sufficiently large, while the SVF model in this paper is $\beta = 0$ and $\gamma$ sufficiently large.

\section*{Acknowledgments} This work has been supported by ERC via Grant EU FP 7 - ERC Consolidator Grant 615216 LifeInverse. MB acknowledges support by the German Science Foundation DFG via  EXC 1003 Cells in Motion Cluster of Excellence, M\"unster, Germany. JG acknowledges support by the Cambridge Biomedical Research Centre. The authors thank Tim L\"opmeier (M\"unster) and Johannes Hjorth (Cambridge) for the acquisition of HD photographs used as test data.


\begin{thebibliography}{4}

%
%
%
%
%

\bibitem{Andreu_et_al} Andreu, F., Ballester, C., Caselles, V., Mazon, J.M.: Minimizing total variation flow. Differential and integral equations 14: 321--360 (2001)


\bibitem{Benning_highOrderTV_2013}
Benning, M., Brune, C., Burger, M., M{\"u}ller, J.: Higher-order {TV}
  methods: {E}nhancement via {B}regman iteration. J Sci Comput  54,  269--310
  (2013)

\bibitem{Benning_Burger_2013}
Benning, M., Burger, M.: Ground states and singular vectors of convex
  variational regularization methods. Methods and Applications of Analysis
  20:  295--334 (2013)
		
\bibitem{Bredies_et_al} Bredies, K., Kunisch, K., Pock, T.: Total generalized variation. SIAM J. Imaging Sci. 3: 492--526 (2010)

\bibitem{Bredies_Pikkarainen} Bredies, K., Pikkarainen, H.K.: Inverse problems in spaces of measures. ESAIM: Control, Optimisation and Calculus of Variations, 19: 190--218 (2013)

\bibitem{Burger_et_al} Burger, M., Gilboa, G., Osher, S., Xu, J.: Nonlinear inverse scale space methods. Communications in Mathematical Sciences 4: 179--212 (2006)


\bibitem{chambollecaselles}Chambolle, A., Caselles, V., Cremers, D., Novaga, M., Pock, T. . An introduction to total variation for image analysis. In: Fornasier, M. (ed.), Theoretical foundations and numerical methods for sparse recovery, DeGruyter, Berlin, 263-340 (2010).

\bibitem{Chambolle_and_Pock} Chambolle, A., Pock, T.: A first-order primal-dual algorithm for convex problems with applications to imaging. J. Math. Imaging Vis., 40: 120--145 (2011)

\bibitem{Chan_et_al} Chan, T., Marquina, A., Mulet, P.: High-order total variation-based image restoration. SIAM Journal on Scientific Computing 22:503--516 (2000)

\bibitem{Delfour_and_Zolesio} Delfour, M.C., Zolesio, J-P.: Shapes and geometries: metrics, analysis, differential calculus, and optimization. SIAM, Philadelphia (2011)

\bibitem{Esser_et_al} Esser, E., Zhang, X., Chan, T.: A general framework for a class of first order primal-dual algorithms for convex optimization in imaging science. SIAM J. Imaging Sci., 3(4): 1015--1046 (2010)

\bibitem{Goldstein_et_al} Goldstein, T., Esser, E., Baraniuk, R.: Adaptive Primal-Dual Hybrid Gradient Methods for Saddle-Point Problems. arxiv:1305.0546v1 [math.NA] (preprint) (2013)

\bibitem{Mainberger_et_al} Mainberger, M., Bruhn, A., Weickert, J., Forchhammer, S.: Edge-based compression of cartoon-like images with homogeneous diffusion. Pattern Recognition 44.9: 1859--1873 (2011)

\bibitem{Mainberger_and_Weickert} Mainberger, M., Weickert, J.: Edge-Based Image Compression with Homogeneous Diffusion. In: Computer Analysis of Images and Patterns. Ed. by Jiang, X., Petkov, N., pp. 476--483. Springer, Berlin, Heidelberg (2009)

\bibitem{meyer}Meyer, Y.: Oscillating Patterns in Image Processing and Nonlinear Evolution Equations, AMS, Providence (2001).

\bibitem{Osher_et_al} Osher, S., Burger, M., Goldfarb, D., Xu, J., Yin, W.: An Iterative Regularization Method for Total Variation-Based Image Restoration. Multiscale Model. Simul., 4: 460--489 (2005)

\bibitem{Ring} Ring, W.: Structural properties of solutions to total variation regularization problems. ESAIM: Math. Modelling  Numer. Analysis 34: 799-810 (2000)

\bibitem{Rudin_et_al} Rudin, L.I., Osher, S., Fatemi, E.: Nonlinear total variation based noise removal algorithms. Physica D 60: 259--268 (1992)

\bibitem{valkonen}Valkonen, T., The jump set under geometric regularisation. Part 1: Basic technique and first-order denoising. arXiv preprint arXiv:1407.1531 (2014).


\bibitem{Weickert_et_al} Weickert, J., Hagenburg, K., Breuss, M., Vogel, O.: Linear Osmosis Models for Visual Computing. Energy Minimization Methods in Computer Vision and Pattern Recognition. Springer Berlin Heidelberg (2013)


\end{thebibliography}
\end{document}